\newtheorem{theorem}{Theorem}[section]
\newtheorem*{theorem A}{Theorem A}
\newtheorem*{theorem B}{N\"olker's Theorem}
\newtheorem{corollary}{Corollary}[section]
\theoremstyle{remark}
\newtheorem{remark}{Remark}[section]
\theoremstyle{remark}
\theoremstyle{definition}
\numberwithin{equation}{section}
\def\({\left ( }
\def\){\right )}
\def\<{\left < }
\def\>{\right >}
\begin{document}

\noindent {\sc {International Electronic Journal of Geometry}}

\noindent {\sc \small Volume 5  No. 2 pp. 59--66 (2012) \copyright
IEJG}

\vspace{2cm}

\title{Some Characterizations of\\
almost Hermitian manifolds}

\author{Hakan Mete Ta\c stan}
\address{\.Istanbul University\\
Department of Mathematics\\
34134, Vezneciler, \.Istanbul-TURKEY}
\email{hakmete@istanbul.edu.tr}

\thanks{}

\subjclass[2000]{53C40, 53C55}

\date{}

\dedicatory{{\rm (Communicated by Grozio STALINOV)}}
\keywords{almost Hermitian manifold; almost constant-type manifold;
holomorphic submanifold; $R$-invariant submanifold; holomorphic
sectional curvature}

\begin{abstract}
We give a condition for an almost constant-type manifold  to be a constant-type manifold, and holomorphic and $R$-invariant submanifolds of
almost Hermitian manifolds are studied. Generalizations of some results in
\cite{Ogi} are given.
\end{abstract}
\maketitle
\section{Introduction}
The theory of submanifolds of Riemannian or almost Hermitian
manifolds is an important topic in differential geometry. In an
almost Hermitian manifold, its complex structure $J$ transforms a
vector to another one which perpendicular to it. Perhaps this has
been the natural motivation to study submanifolds of an almost
Hermitian manifold, according to
the behavior of its tangent bundle under the action of the almost complex structure $J$ of the ambient manifold.\\

There are many classes of submanifolds in the literature. One of the
classes is \emph{holomorphic} (\emph{invariant}) submanifolds. In
this case the tangent space of the submanifold remains invariant
under the almost complex structure $J$. The other one is
$R$\emph{-invariant submanifolds}, where $R$ is the Riemannian
curvature tensor of the ambient manifold. In which case, $R_{XY}$
defines, at each point of the submanifold, a linear transformation
on the tangent space of the submanifold at the point and the tangent
space remains invariant under $R_{XY}$, where $X$ and $Y$ are
elements of the tangent space. The theory of holomorphic
submanifolds has been a
very active area whereas the theory of $R$-invariant submanifolds has not been so far.\\

This paper is organized as follows. In Section 2, the fundamental
definitions and notions of almost Hermitian manifolds are given. In
Section 3, we recall some basic formulas for  submanifolds of a
Riemannian manifold and the definition of holomorphic  submanifold
of an almost Hermitian manifold. In Section 4, we study almost constant-type manifolds
and give a characterization theorem. In section 5, after recalling the
definition of $R$-invariant submanifold we study holomorphic,
totally umbilical and $R$-invariant submanifolds of
almost Hermitian manifolds ($AH$-manifolds). In particular, we shall
improve Theorem 4.2 of \cite{Ogi} concerning Kaehlerian manifolds. In the last
section, the strongly $R$-invariant submanifolds are considered and
analogues of Theorem 3.4 and Theorem 3.5 of \cite{Ogi} concerning Riemannian manifolds are
given. A different characterization of almost constant-type manifolds is placed in this section.
\section{Preliminaries}
Let $M$ be an almost Hermitian manifold, that is, its tangent bundle
has an almost complex structure $J$ and a Riemannian metric $g$ such
that $g(JX,JY)=g(X,Y)$ for all $X,Y\in\chi(M)$, where $\chi(M)$ is
the Lie algebra of $C^{\infty}$ vector fields on $M$. Let $\nabla$
be the Riemannian connection on $M$, the Riemannian  curvature
tensor $R$ associated with $\nabla$ defined by
$R(X,Y)=\nabla_{[X,Y]}-[\nabla_{X},\nabla_{Y}].$ We denote
$g(R(X,Y)Z,W)$ by $R(X,Y,Z,W)$. Curvature identities are keys to understanding the geometry of almost Hermitian manifolds.
The following curvature identities are used in many studies, for
example, see \cite{Gan}.
\begin{enumerate}
\item $R(X,Y,Z,W)=R(X,Y,JZ,JW),$

\item  $R(X,Y,Z,W)=R(JX,JY,Z,W)+R(JX,Y,JZ,W)$

               $\quad\quad\quad\quad\quad\quad$$+R(JX,Y,Z,JW),$

\item $R(X,Y,Z,W)=R(JX,JY,JZ,JW)$.
\end{enumerate}
Let $AH_{i}$ denote the subclass of the class $AH$ of almost
Hermitian manifolds satisfying the curvature identity (i), i=1,2,3.
The following inclusion relations is well known in the literature.
$$AH_{1}\subset AH_{2}\subset AH_{3}\subset AH$$
An almost Hermitian manifold $M$ is called \emph{Kaehlerian} if
$\nabla_{X}J=0$ for all $X\in\chi(M)$. It is well known that a
Kaehlerian manifold is an $AH_{1}$-manifold (\cite{Yan}). Some authors call $AH_{1}$-manifold as a \emph{para-Kaehlerian} and call $AH_{3}$-manifold as an $RK$-\emph{manifold} (\cite{Van}).\\

By a \emph{plane section} we mean a  two-dimensional linear subspace
of a tangent space $T_{p}M$. A plane section $\sigma$ is said to
be \emph{holomorphic} (resp.\emph{anti-holomorphic} or \emph{totally
real}) if $J\sigma=\sigma$ (resp. $J\sigma\bot\sigma$). The
sectional curvature $K$ of $M$ determined by orthonormal vector
fields $X$ and $Y$ is given by $K(X,Y)=R(X,Y,X,Y).$The  sectional
curvature of $M$ restricted to a holomorphic (resp. an
anti-holomorphic) plane $\sigma$ is called \emph{holomorphic} (resp.
\emph{anti-holomorphic}) \emph{sectional curvature}. If the
holomorphic (resp. anti-holomorphic) sectional curvature at each
point $p\in M$, does not depend on $\sigma$, then $M$ is said to
be\emph{ pointwise constant holomorphic} (resp. \emph{pointwise
constant anti-holomorphic) sectional curvature}. A Riemannian
manifold of constant curvature is called a \emph{space form} or a
\emph{real space form}. Sometimes  a space form defined as a
complete connected Riemannian manifold of constant curvature  is
said to be \emph{elliptic}, \emph{hyperbolic} or \emph{flat} (or
\emph{locally Euclidean}) according as the sectional curvature is
positive, negative or zero. A Kaehlerian manifold of constant
holomorphic sectional curvature is called a \emph{complex space
form} (\cite{Kas,Yan}).
\section{Submanifolds of a Riemannian manifold}
Let $N$ be a submanifold of a Riemannian manifold $M$ with a
Riemannian metric $g.$ Let $\chi(N)$ and $\chi(M)$ the Lie algebras
of vector fields on $N$ and $M$ respectively, and
$\overline{\chi}(N)$ denote the Lie algebra of restrictions to $N$
of vector fields in $\chi(M)$. We then may write $\overline{\chi}(N)
=\chi(N)\oplus\chi(N)^{\bot}$, where $\chi(N)^{\bot}$ consists of
all vector fields perpendicular to $N$. The Gauss and Weingarten
formulas are given respectively by
$\nabla_{X}Y=\hat{\nabla}_{X}Y+B(X,Y)$  and
$\nabla_{X}\xi=-A_{\xi}X+\nabla_{X}^{\bot}\xi$ for all
$X,Y\in\chi(N)$ and $\xi\in\chi^{\bot}(N)$, where $\nabla,
\hat{\nabla}$ and $\nabla^{\bot}$ are respectively the Riemannian,
induced Riemannian and induced normal connection in $M, N$ and the
normal bundle $\chi^{\bot}(N)$ of $N$, and $B$ is the second
fundamental form related to shape operator $A$ corresponding to the
normal vector field $\xi$ by $g(B(X,Y),\xi)=g(A_{\xi}X,Y).$ We say
that $N$ is \emph{totally umbilical} submanifold in $M$ if for all
$X,Y\in\chi(N),$ we have
\begin{equation}
\label{e1}
\begin{array}{c}
B(X,Y)=g(X,Y)H
\end{array},
\end{equation}
where $H\in\chi^{\bot}(N)$ is the mean curvature vector field of $N$
in $M$. A vector field $\xi\in\chi^{\bot}(N)$ is said to be
 \emph{parallel} if $ \nabla^{\bot}_{X}\xi=0$ for each $X\in\chi(N)$. The Codazzi equation is given by
\begin{equation}
\label{e1}
\begin{array}{c}
(R(X,Y)Z)^{\bot}=(\nabla_{X}B)(Y,Z)-(\nabla_{Y}B)(X,Z)
\end{array}
\end{equation}
for all $X,Y,Z\in\chi(N),$ where $^{\bot}$ denotes the normal
component and the covariant derivative of $B,$ denoted by
$\nabla_{X}B$, is defined by
\begin{equation}
\label{e1}
\begin{array}{c}
(\nabla_{X}B)(Y,Z)=\nabla^{\bot}_{X}(B(Y,Z))-B(\hat{\nabla}_{X}Y,Z)-B(Y,\hat{\nabla}_{X}Z)
\end{array}
\end{equation}
for all $X,Y,Z\in\chi(N)$ \cite{Kas,Ogi,Yan}.\\

Now, let $(M,J,g)$ (or briefly $M$) be an almost Hermitian manifold
and $N$ be a Riemannian submanifold of $M$. If $J(T_{p}N)=T_{p}N$ at
each point $p\in N,$
 $T_{p}N$ being the tangent space over $N$ in $M,$ then $N$ is called a \emph{holomorphic} submanifold of $M.$ In this case we see that
$JT_{p}N^{\bot}= T_{p}N^{\bot}.$ The fundamental properties and
basic formulas of holomorphic submanifolds can be found in
\cite{Yan}.
\section{Almost Constant-type manifolds}
The following notion of constant type, is first defined by A. Gray for nearly Kaehlerian manifolds (\cite{Gray}),
and then by L. Vanhecke for almost Hermitian manifolds (\cite{Van}).\\

Let $M$ be an almost Hermitian manifold. Then $M$ is said
to be of \emph{constant type} at  $p\in M$ provided that for all $X\in T_{p}M$,
we have $\lambda(X,Y)=\lambda(X,Z)$ whenever the planes  $span\{X,Y\}$ and
$span\{X,Z\}$ are  anti-holomorphic and $g(Y,Y)=g(Z,Z)$,
where the function $\lambda$ is defined by
$\lambda(X,Y)=R(X,Y,X,Y)-R(X,Y,JX,JY)$. If this holds for all $p\in M$,
then we say that  $M$ has \emph{(pointwise) constant type}.
Finally, if for $X,Y\in\chi(M)$ with $g(X,Y)=g(JX,Y)=0,$ the value
$\lambda(X,Y)$ is constant whenever $g(X,X)=g(Y,Y)=1$,
then we say that $M$ has \emph{global constant type.}\\

In \cite{Rizza}, for an $AH$-manifold with dimension $2m\geq4$, G.B. Rizza gave an equivalent definition to the one above.
For some geometrical notions and for the notations, see \cite{Riza, Rizza}.\\

The manifold $M$ is said to be of \emph{constant type} $\alpha$ at $p$
 if and only if  we have
\begin{equation}
\label{e1}
\begin{array}{c}
K_{\sigma}-\chi_{\sigma J\sigma}=\alpha
\end{array},
\end{equation}
where $\sigma$ is any anti-holomorphic plane of $T_{p}M$ and $p\in M$.\\

In the same paper,  G.B. Rizza also generalized the notion of constant type as follows.\\

We  say that $M$ has \emph{constant type} $\alpha$, \emph{in a weak sense}, at $p$,
 if and only if  we have
\begin{equation}
\label{e1}
\begin{array}{c}
K_{\sigma}-2\chi_{\sigma J\sigma}+K_{J\sigma}=2\alpha
\end{array},
\end{equation}
where $\sigma$ is any anti-holomorphic plane of $T_{p}M$ and $p\in M$.

\begin{remark} If $\sigma$ is an anti-holomorphic plane of $T_{p}M$, then $J\sigma$ is also
an anti-holomorphic plane of $T_{p}M$, so (4.1) implies (4.2). In particular, if the curvature tensor
$R$ is $J$-invariant, that is, if $R$ satisfies the curvature identity (3), or the manifold $M$ belongs to
class $AH_{3}$, then we have $K_{\sigma}=K_{J\sigma}$. Thus (4.2) reduces to (4.1). (cf. \cite{Rizza}).
\end{remark}
An $AH$-manifold $M$ is said to be an \emph{almost constant-type manifold}
if and only if $M$ has constant type, in a weak sense at any point $p\in M$.
In particular, $M$ is a \emph{constant-type manifold} if the condition (4.1) is satisfied at any point $p\in M$, see \cite{Rizza}.\\

Now, we give a condition for an almost constant-type manifold  to be a constant-type manifold.
\begin{theorem} Let $M$ be an $AH$-manifold of almost constant-type $\alpha$. Then $M$ has constant type $\alpha$
if and only if $M$ belongs to class $AH_{3}$.
\end{theorem}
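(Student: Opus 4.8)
The plan is to play the two Rizza-type conditions against each other. By hypothesis $M$ has constant type $\alpha$ in the weak sense, so for every anti-holomorphic plane $\sigma$ of $T_pM$ we have the identity (4.2), namely $K_\sigma - 2\chi_{\sigma J\sigma} + K_{J\sigma} = 2\alpha$. We want to show that (4.1), i.e. $K_\sigma - \chi_{\sigma J\sigma} = \alpha$ for every anti-holomorphic $\sigma$, holds if and only if $M \in AH_3$.

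For the \emph{if} direction I would invoke Remark 4.1 directly: if $M \in AH_3$ then the curvature tensor satisfies identity (3), hence $K_\sigma = K_{J\sigma}$ for every anti-holomorphic plane $\sigma$ (since $R(X,Y,X,Y) = R(JX,JY,JX,JY)$ and $J\sigma$ is again anti-holomorphic). Substituting $K_{J\sigma} = K_\sigma$ into (4.2) gives $2K_\sigma - 2\chi_{\sigma J\sigma} = 2\alpha$, which is exactly (4.1). So this half is essentially a quotation of the remark, with the observation that the weak-sense constant $\alpha$ and the strict constant $\alpha$ agree because the factor of $2$ cancels.

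For the \emph{only if} direction, suppose $M$ has constant type $\alpha$ in the strict sense, so (4.1) holds for all anti-holomorphic $\sigma$. Applying (4.1) to $\sigma$ and separately to $J\sigma$ (which is anti-holomorphic by Remark 4.1) and noting $\chi_{J\sigma\, J(J\sigma)} = \chi_{J\sigma\,\sigma} = \chi_{\sigma J\sigma}$, one gets $K_\sigma - \chi_{\sigma J\sigma} = \alpha = K_{J\sigma} - \chi_{\sigma J\sigma}$, hence $K_\sigma = K_{J\sigma}$ for every anti-holomorphic plane $\sigma$. The real work is to upgrade this equality, which a priori only involves sectional curvatures of anti-holomorphic planes, to the full $J$-invariance identity (3) of the curvature tensor. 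The standard route is polarization: starting from $R(X,Y,X,Y) = R(JX,JY,JX,JY)$ for all $X,Y$ spanning an anti-holomorphic plane, one first removes the anti-holomorphy restriction (a plane $\mathrm{span}\{X,Y\}$ with $g(X,Y)=0$ can always be perturbed, or one uses that both sides are polynomial in the metric data so the identity extends by continuity/density of anti-holomorphic configurations), then polarizes in $X$, then in $Y$, using the pair symmetry $R(X,Y,Z,W)=R(Z,W,X,Y)$ and the first Bianchi identity to collect terms and conclude $R(X,Y,Z,W) = R(JX,JY,JZ,JW)$ for all $X,Y,Z,W$. This last polarization argument is the main obstacle: one must check carefully that every term produced by expanding $R(X+Z, Y, X+Z, Y)$ and then $R(X, Y+W, X, Y+W)$ actually pairs up, which requires repeated use of the algebraic symmetries of $R$ and of $g(JX,JY)=g(X,Y)$; it is routine in principle but bookkeeping-heavy, and is where a careless sign or a missed Bianchi substitution would break the proof.

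Once $R(X,Y,Z,W) = R(JX,JY,JZ,JW)$ is established identically, $M$ satisfies curvature identity (3) by definition, i.e. $M \in AH_3$, completing the equivalence.
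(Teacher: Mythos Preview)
Your ``if'' direction is fine and matches the paper. The gap is in the ``only if'' direction, specifically in how you pass from $K_\sigma = K_{J\sigma}$ for anti-holomorphic $\sigma$ to $R(X,Y,X,Y) = R(JX,JY,JX,JY)$ for \emph{all} $X,Y$. Your proposed justification---that anti-holomorphic configurations are dense, or that one can ``perturb''---is incorrect: the set of pairs $(X,Y)$ with $g(JX,Y)=0$ is a proper closed hypersurface in $T_pM\times T_pM$, not a dense subset, so a polynomial identity holding there need not extend. For instance, $g(JX,Y)^2$ is a bihomogeneous expression of degree $(2,2)$ that vanishes on every anti-holomorphic pair yet is not identically zero, and nothing in your argument rules out a priori that $R(X,Y,X,Y)-R(JX,JY,JX,JY)$ carries such a term.

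The paper circumvents this by invoking Rizza's Theorem~1, which already performs the non-trivial extension: constant type $\alpha$ in the strict sense (respectively, in the weak sense) is shown to be equivalent to the polynomial identity~(4.4) (respectively,~(4.3)) holding for \emph{all} $X,Y\in T_pM$, with right-hand side $\alpha\{g(X,X)g(Y,Y)-g(X,Y)^2-g(JX,Y)^2\}$. Multiplying~(4.4) by $2$ and subtracting from~(4.3) then yields $R(X,Y,X,Y)=R(JX,JY,JX,JY)$ for all $X,Y$ directly, with no density argument needed. The final polarization to the full identity~(3) is then obtained by citing Gan\v{c}ev's Theorem~2 rather than expanding by hand. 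So the missing ingredient in your plan is precisely Rizza's extension lemma; once you have~(4.4) for all $X,Y$, the remainder of your outline goes through.
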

\begin{proof} Let $M$ be an $AH$-manifold of constant type $\alpha$, in a weak sense, at $p\in M.$
From the equation (5) in the proof of Theorem 1(\cite{Rizza}), we have
\begin{equation}
\label{e1}
\begin{array}{c}
\quad\quad\quad R(X,Y,X,Y)-2R(X,Y,JX,JY)+R(JX,JY,JX,JY)\\
=2\alpha\{g(X,X)g(Y,Y)-(g(X,Y))^{2}-(g(JX,Y))^{2}\}
\end{array}
\end{equation}
for any vectors $X,Y\in T_{p}M$.
Now, suppose that $M$ has constant type $\alpha$, at $p\in M.$
By a similar method given in  the proof of Theorem 1(\cite{Rizza}), we obtain
\begin{equation}
\label{e1}
\begin{array}{c}
R(X,Y,X,Y)-R(X,Y,JX,JY)\\
\quad\quad\quad\quad\quad\quad\quad=\alpha\{g(X,X)g(Y,Y)-(g(X,Y))^{2}-(g(JX,Y))^{2}\}
\end{array}
\end{equation}
for any vectors $X,Y\in T_{p}M$. ( The equation (4.4) is also known in another form, see \cite{Kiri}).
If we multiply the equation (4.4) by 2  and then subtract it from the equation (4.3), we get
\begin{equation}
\label{e1}
\begin{array}{c}
R(X,Y,X,Y)=R(JX,JY,JX,JY)
\end{array}.
\end{equation}
Now, let $ T:(T_{p}(M))^{4}\rightarrow\mathbb{R}$ be a four-linear mapping given by
$$T(X,Y,Z,W)=R(X,Y,Z,W)-R(JX,JY,JZ,JW)$$
for all $X,Y,Z,W\in T_{p}(M).$ Using the well-known properties of $R$ and (4.5),
it is not difficult to see that $T$ satisfies the conditions of Theorem 2(\cite{Gan}),
so it follows that  $T=0.$ This means that the curvature tensor $R$ of
$M$ satisfies the curvature identity (3); that is, $M$ belongs to class $AH_{3}$,
in which case, by virtue of  Remark 1(\cite{Rizza}), $M$ also belongs to the class $AH_{2}$.
On the other hand, by Remark 4.1, the converse of Theorem 4.1. is also true.
\end{proof}
\section{$R$-invariant submanifolds}
Let $(M,g)$ be a Riemannian manifold and $N$ be a submanifold of
$M$. Then $R(X,Y)=R_{XY}$ defines, at each point $p$ of $N$, a
linear transformation on the tangent space $T_{p}N$ at the point,
where $R$ is the Riemannian curvature tensor of $M$. If for any
$X,Y\in T_{p}N,$ the relation $R_{XY}(T_{p}N)\subset T_{p}N$ is
satisfied, then $N$ is called an $R$-\emph{invariant} submanifold of
$M$. For the equivalent definitions of $R$-invariant submanifolds,
see \cite{Ogi}.
\begin{theorem}If every holomorphic submanifold of a connected $AH$-manifold $M$
of dimension $2m\geq6$ is $R$-invariant, then\\
\textbf{a)} $M$ is a real space form or a complex space form.\\
\textbf{b)} $M$ is an $AH_{2}$-manifold.\\
\textbf{c)} The curvature tensor $R$ of $M$ has the form
\begin{equation}
\label{e7}
\begin{array}{c}
R(X,Y,Z,W)=\alpha R_{1}(X,Y,Z,W)+\beta R_{2}(X,Y,Z,W)
\end{array}
\end{equation}
where $\alpha,\beta$ are constants,
$R_{1}(X,Y,Z,W)=g(X,W)g(Y,Z)-g(X,Z)g(Y,W)$ and
$R_{2}(X,Y,Z,W)=g(JX,W)g(JY,Z)-g(JX,Z)g(JY,W)-2g(JX,Y)g(JZ,W)$ for
all  $X,Y,Z,W\in T_{p}M$ and $p\in M.$\\
Moreover, the converse of \textbf{c)} is also true.
\end{theorem}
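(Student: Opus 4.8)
The hypothesis is that *every* holomorphic submanifold of $M$ is $R$-invariant. The strategy is to extract strong pointwise curvature conditions by applying this hypothesis to carefully chosen holomorphic submanifolds — in particular, to small holomorphic surfaces (real dimension $2$) tangent to arbitrary holomorphic planes, which always exist locally as totally geodesic-in-the-first-order $J$-invariant surfaces through a point with prescribed tangent $J$-invariant plane. The $R$-invariance condition $R_{XY}(T_pN)\subset T_pN$ for such an $N$ forces $R(X,Y,Z,W)=0$ whenever $\mathrm{span}\{X,Y,JX,JY\}$ is (real) $2$-dimensional, i.e.\ $Y\in\mathrm{span}\{X,JX\}$, and $Z$ lies in that plane while $W$ is orthogonal to it. Pushing this through all choices of the tangent plane should show that $R(X,JX)$ maps $\mathrm{span}\{X,JX\}^{\perp}$ to zero, and more: writing everything out, one gets that for orthonormal $X$, and any $W\perp X,JX$, the quantities $R(X,JX,X,W)$ and $R(X,JX,JX,W)$ vanish. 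This is the key rigidity step, and it is where the bulk of the work lies: translating the geometric invariance hypothesis into these vanishing identities for all holomorphic planes.

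**Deriving (a) and (c).** Once the above vanishing relations hold, the next step is a standard (though somewhat lengthy) algebraic argument on the curvature tensor at each point. The condition that $R(X,JX,\cdot,\cdot)$ vanishes on the orthogonal complement of $\mathrm{span}\{X,JX\}$, for every unit $X$, together with the first Bianchi identity and the metric compatibility $g(JX,JY)=g(X,Y)$, is known to imply that the holomorphic sectional curvature is pointwise constant (a Schur-type / Gray-type computation), and simultaneously that the "real" sectional curvature on totally real planes is pointwise constant. The classical result here is that an almost Hermitian manifold whose curvature tensor has the algebraic symmetries forced by these vanishings must have $R$ of the form $\alpha R_1 + \beta R_2$ at each point, with $\alpha,\beta$ functions on $M$; this is exactly the decomposition written in $(5.2)$. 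Connectedness plus a Schur-type argument (differentiating the identity and using the second Bianchi identity) upgrades $\alpha,\beta$ to constants. If $\beta=0$ everywhere we get a real space form; otherwise we get a complex space form (noting that $R_2$ is essentially the curvature tensor of constant holomorphic curvature, and that the presence of the $R_2$-term forces the relevant integrability, i.e.\ effectively Kaehler behavior of $R$). That gives (a).

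**Deriving (b) and the converse.** Part (b) is then essentially a corollary: a direct check shows that both $R_1$ and $R_2$ individually satisfy curvature identity (2) — i.e.\ $R_i(X,Y,Z,W) = R_i(JX,JY,Z,W)+R_i(JX,Y,JZ,W)+R_i(JX,Y,Z,JW)$ — so any linear combination $\alpha R_1+\beta R_2$ does too, hence $M\in AH_2$. This is a routine but slightly tedious substitution using $g(JX,JY)=g(X,Y)$ and $g(J^2X,Y)=-g(X,Y)$; I would just verify it for $R_2$ (the $R_1$ case being trivial since $R_1$ is $J$-free and one uses the metric identities). For the converse of (c): assuming $R=\alpha R_1+\beta R_2$ with $\alpha,\beta$ constant, and letting $N$ be any holomorphic submanifold with $X,Y\in T_pN$, one computes $R_{XY}Z$ for $Z\in T_pN$ and checks term by term that the result stays tangent to $N$. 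For the $R_1$-part this is immediate ($R_1(X,Y)Z = g(Y,Z)X - g(X,Z)Y \in T_pN$). For the $R_2$-part one uses $J(T_pN)=T_pN$: each term of $R_2(X,Y)Z$ is a multiple of $JX$, $JY$, or $JZ$ — wait, more precisely, raising an index in $R_2$ produces vectors of the form $g(J\cdot,\cdot)\,J(\cdot)$, all of which lie in $J(T_pN)=T_pN$. Hence $R_{XY}(T_pN)\subset T_pN$, so $N$ is $R$-invariant, completing the proof.

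**Main obstacle.** I expect the hard part to be the first step: rigorously producing, for an *arbitrary* holomorphic plane at an arbitrary point, a genuine holomorphic submanifold tangent to it whose $R$-invariance yields usable equations — and then organizing the resulting family of vanishing conditions efficiently enough to pin down the algebraic form of $R$. The dimension hypothesis $2m\ge 6$ is presumably exactly what guarantees enough room (a nontrivial orthogonal complement $\mathrm{span}\{X,JX\}^{\perp}$ containing a further $J$-invariant plane) for these surfaces to exist and for the orthogonality conditions in $R_{XY}(T_pN)\subset T_pN$ to have content; keeping track of where that hypothesis is used will be the delicate bookkeeping.
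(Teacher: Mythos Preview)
Your overall strategy aligns with the paper's: extract pointwise curvature identities from $R$-invariance of suitable holomorphic submanifolds, then pin down $R$. But there is a genuine gap in the extraction step. You restrict to real $2$-dimensional holomorphic surfaces tangent to $\mathrm{span}\{X,JX\}$, obtaining $R(X,JX,X,\xi)=R(X,JX,JX,\xi)=0$ for all $\xi\perp X,JX$. This is the paper's (5.4) and it does give pointwise constant holomorphic sectional curvature. However, for a general $AH$-manifold this alone does \emph{not} yield pointwise constant anti-holomorphic sectional curvature, contrary to your assertion. The paper needs a second, independent identity, $R(X,Y,Y,\xi)=0$ for orthonormal $X,Y$ with $g(X,JY)=0$ (equation (5.5)), and obtains it by applying $R$-invariance to a holomorphic submanifold whose tangent space contains the anti-holomorphic pair $\{X,Y\}$ and hence all of $\{X,JX,Y,JY\}$ --- real dimension at least $4$. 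Your $2$-surfaces cannot detect this; polarizing (5.4) in $X$ only yields mixed relations such as $R(X,Y,Y,\xi)+R(JX,JY,Y,\xi)+R(Y,JY,JX,\xi)=0$, which do not collapse to (5.5) without curvature identities you have not yet established. This is also precisely where $2m\ge6$ is used: it guarantees a nontrivial normal space for a $4$-dimensional holomorphic submanifold, so that a normal $\xi$ exists.

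A secondary issue: your derivation of (a) from (c) via ``$\beta\ne0$ forces integrability, hence Kaehler'' is unjustified --- the algebraic form $R=\alpha R_1+\beta R_2$ does not by itself make an $AH$-manifold Kaehler. The paper instead deduces (a) directly from the simultaneous pointwise constancy of holomorphic and anti-holomorphic sectional curvature via Theorem~C of \cite{Kas}, and then obtains (b) and (c) from Theorem~3 of \cite{Gan}. Your handling of (b) and of the converse of (c) is correct and essentially matches the paper's argument.
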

\begin{proof} If for any point $p$ in $M$ of a connected $AH$-manifold $M$
of dimension $2m\geq6$ and for any holomorphic subspace $\sigma$ of $T_{p}M$
there exists an $R$-invariant holomorphic submanifold $N$ of $M$ through $p$
such that $T_{p}N$ contains $\sigma$, then we have
\begin{equation}
\label{e7}
\begin{array}{c}
(R(X,JX)JX)^{\bot}=0
\end{array}
\end{equation}
and
\begin{equation}
\label{e7}
\begin{array}{c}
(R(X,Y)Y)^{\bot}=0
\end{array}
\end{equation}
for all orthonormal vectors $X,Y\in\sigma $ with  $g(X,JY)=0$ and
$p\in N$. For any vector $\xi$ normal to $N$ at $p,$ from (5.2) and
(5.3), we get
\begin{equation}
\label{e7}
\begin{array}{c}
R(X,JX,JX,\xi)=0
\end{array}
\end{equation}
and
\begin{equation}
\label{e7}
\begin{array}{c}
R(X,Y,Y,\xi)=0
\end{array}.
\end{equation}
From (5.4) and Lemma 1(\cite{Kas}), we conclude
that $M$ has pointwise constant holomorphic sectional curvature $\mu$ and from (5.5)
and Lemma 4(\cite{Kas}), we find that $M$ has pointwise constant anti-holomorphic sectional
curvature $\nu.$  In this case, the assertion \textbf{a)} follows
from Theorem C(\cite{Kas}) and assertions \textbf{b)} and
\textbf{c)} follow from Theorem 3(\cite{Gan}).\\
Now we prove the converse of \textbf{c)}. Let the curvature tensor
$R$ of $M$ satisfy (5.1), then after some calculation in (5.1), we
have
\begin{equation}
\label{e7}
\begin{array}{c}
R(X,Y)Z=\alpha\{g(Y,Z)X-g(X,Z)Y\}\quad\quad\quad\quad\quad\quad\\
\quad\quad\quad\quad\quad\quad\quad\quad\quad\quad+\beta\{g(JY,Z)JX-g(JX,Z)JY-2g(JX,Y)JZ\}
\end{array}
\end{equation}
for all  $X,Y,Z,W\in T_{p}M$ and $p\in M.$ In that case, if $N$ is
any holomorphic submanifold of the manifold $M,$ then from (5.6), we easily see that $R(X,Y)Z\in T_{p}N$ for all
$X,Y,Z\in T_{p}N$ and $p\in N$, that is, $N$ is $R$-invariant.
\end{proof}
\begin{remark} The assertions \textbf{a)} and \textbf{c)} of Theorem 5.1. are equivalent whenever
the ambient manifold $M$  is of class $AH_{3}$ because of Theorem 12.7(\cite{Tri}).
\end{remark}
\begin{theorem} Let $N$ be any connected totally umbilical submanifold of an $AH$-manifold $M$.
Then $N$ is an $R$-invariant submanifold if and only if it has
parallel mean curvature vector field.
\end{theorem}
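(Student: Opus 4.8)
The plan is to work directly from the Codazzi equation together with the totally umbilical condition $B(X,Y)=g(X,Y)H$. First I would substitute the umbilical formula into the definition of the covariant derivative of $B$: using $(\nabla_{X}B)(Y,Z)=\nabla^{\bot}_{X}(B(Y,Z))-B(\hat{\nabla}_{X}Y,Z)-B(Y,\hat{\nabla}_{X}Z)$ and $B(Y,Z)=g(Y,Z)H$, one gets
\begin{equation}
\label{e13}
\begin{array}{c}
(\nabla_{X}B)(Y,Z)=X(g(Y,Z))H+g(Y,Z)\nabla^{\bot}_{X}H-g(\hat{\nabla}_{X}Y,Z)H-g(Y,\hat{\nabla}_{X}Z)H.
\end{array}
\end{equation}
Since $\hat{\nabla}$ is metric, $X(g(Y,Z))=g(\hat{\nabla}_{X}Y,Z)+g(Y,\hat{\nabla}_{X}Z)$, so the four $H$-terms collapse and we are left with $(\nabla_{X}B)(Y,Z)=g(Y,Z)\nabla^{\bot}_{X}H$. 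Plugging this into the Codazzi equation (the second displayed equation of Section 3) then yields
\begin{equation}
\label{e14}
\begin{array}{c}
(R(X,Y)Z)^{\bot}=g(Y,Z)\nabla^{\bot}_{X}H-g(X,Z)\nabla^{\bot}_{Y}H
\end{array}
\end{equation}
for all $X,Y,Z\in\chi(N)$.

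From \eqref{e14} the two directions are both short. For the "if" direction: if $H$ is parallel, i.e. $\nabla^{\bot}_{X}H=0$ for every $X\in\chi(N)$, then the right-hand side of \eqref{e14} vanishes identically, so $(R(X,Y)Z)^{\bot}=0$ for all $X,Y,Z\in\chi(N)$, which is exactly the statement that $R(X,Y)Z\in T_{p}N$, i.e. $N$ is $R$-invariant. For the "only if" direction: assume $N$ is $R$-invariant, so the left-hand side of \eqref{e14} is zero, giving
\begin{equation}
\label{e15}
\begin{array}{c}
g(Y,Z)\nabla^{\bot}_{X}H=g(X,Z)\nabla^{\bot}_{Y}H
\end{array}
\end{equation}
for all $X,Y,Z\in\chi(N)$. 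To extract $\nabla^{\bot}_{X}H=0$ I would fix a point $p\in N$, choose a unit vector $X\in T_{p}N$, and pick $Y\in T_{p}N$ orthogonal to $X$ (possible when $\dim N\geq 2$); setting $Z=X$ in \eqref{e15} kills the left side and forces $g(X,X)\nabla^{\bot}_{Y}H=0$, hence $\nabla^{\bot}_{Y}H=0$ for every $Y\perp X$; then swapping the roles (or taking $Z=Y$ with another orthogonal vector) gives $\nabla^{\bot}_{X}H=0$ as well, so $H$ is parallel. One should dispose of the degenerate case $\dim N\leq 1$ separately — a one-dimensional submanifold (a curve) has no nontrivial plane sections and the $R$-invariance condition is vacuous, but in that low-dimensional range the statement either holds trivially or the hypothesis $\dim N\geq 2$ is implicitly understood; I would simply remark that the interesting case is $\dim N\geq 2$.

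The main obstacle, such as it is, is purely bookkeeping: making sure the cancellation of the $H$-terms in the computation of $(\nabla_{X}B)(Y,Z)$ is carried out correctly using metricity of the induced connection $\hat{\nabla}$, and then being careful about which indices to specialize in \eqref{e15} to conclude $\nabla^{\bot}H=0$ rather than some weaker statement. There is no deep input needed — no curvature identity of the ambient almost Hermitian structure is used at all, so in fact the result holds for an arbitrary Riemannian ambient manifold; the almost Hermitian hypothesis in the statement is not exploited in the proof. I would present \eqref{e14} as the key identity and then dispatch both implications in a line or two each.
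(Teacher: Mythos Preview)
Your proof is correct and follows essentially the same route as the paper: both derive the key identity $(R(X,Y)Z)^{\bot}=g(Y,Z)\nabla^{\bot}_{X}H-g(X,Z)\nabla^{\bot}_{Y}H$ from the Codazzi equation and the umbilical condition, then specialize to orthogonal vectors to conclude. The paper's specialization is the slightly cleaner choice $Y=Z$ with $g(X,Y)=0$, which gives $\nabla^{\bot}_{X}H=0$ in one step rather than two; your side remarks that $\dim N\geq 2$ is needed and that the almost Hermitian structure plays no role are accurate and worth keeping.
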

\begin{proof} Let $N$ be any connected totally umbilical submanifold of an $AH$-manifold
$M$. With the help of (3.1) and (3.3), from the Codazzi equation
(3.2), we have
\begin{equation}
\label{e7}
\begin{array}{c}
(R(X,Y)Z)^{\bot}=g(Y,Z)\nabla^{\bot}_{X}H-g(X,Z)\nabla^{\bot}_{Y}H
\end{array}
\end{equation}
for all $X,Y,Z\in\chi(N),$ where $H$ is mean curvature vector field
of $N$ in $M$. Let $N$ be $R$-invariant, then from (5.7), we have
$(R(X,Y)Z)^{\bot}=0$, in which case, if we choose $Y=Z$ with
$g(X,Y)=0$ in (5.7), we obtain $\nabla^{\bot}_{X}H=0$, which says
that $H$ is parallel. On the other hand, if $H$ is parallel, from
(5.7), we easily see that $(R(X,Y)Z)^{\bot}=0$, which means that $N$
is an $R$-invariant submanifold.
\end{proof}
We remark that  Theorem 5.1 is a generalization  Theorem 4.2(\cite{Ogi}) concerning Kaehlerian manifolds  and analogue of Theorem 2.2(\cite{Ogi}) concerning Riemannian manifolds.
\section{Strongly $R$-invariant submanifolds}
Let $(M,g)$ be a Riemannian manifold and $N$ be a submanifold of
$M$. If for any $X,Y\in T_{p}M,$ the relation $R_{XY}(T_{p}N)\subset
T_{p}N$ is satisfied, then $N$ is called a \emph{strongly}
$R$-\emph{invariant} submanifold of $M$. It is easy to see that a
strongly $R$-invariant submanifold is an $R$-invariant submanifold.
For the equivalent definitions of strongly $R$-invariant
submanifolds, see \cite{Ogi}.
\begin{theorem} There exists no holomorphic strongly $R$-invariant submanifold of an $AH$-manifold
$M$ with pointwise non-zero constant holomorphic sectional curvature
$\mu.$
\end{theorem}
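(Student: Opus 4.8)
The plan is to argue by contradiction: suppose $N$ is a holomorphic strongly $R$-invariant submanifold of an $AH$-manifold $M$ with pointwise non-zero constant holomorphic sectional curvature $\mu$. Since $N$ is holomorphic, at each $p\in N$ we have $J(T_pN)=T_pN$ and hence $J(T_pN^{\bot})=T_pN^{\bot}$. The strong $R$-invariance means that for \emph{all} $X,Y\in T_pM$ (not merely $X,Y\in T_pN$) we have $R(X,Y)(T_pN)\subset T_pN$; equivalently, by the symmetries of $R$, one also has $R(X,Y)(T_pN^{\bot})\subset T_pN^{\bot}$. The first step is to translate this into the vanishing of the mixed components $R(X,Y,Z,\xi)$ whenever three of the arguments lie in $T_pN$ and one in $T_pN^{\bot}$, or two and two in an appropriate pattern.

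Next I would exploit the pointwise constant holomorphic sectional curvature. By Lemma~1 of \cite{Kas} (used already in the proof of Theorem~5.1), the condition of pointwise constant holomorphic sectional curvature $\mu$ is characterized by the vanishing $R(X,JX,JX,\xi)=0$ for $\xi\perp X, JX$; more to the point, constant holomorphic sectional curvature forces an algebraic expression for $R(X,JX,X,JX)$, namely $R(X,JX,X,JX)=\mu\,g(X,X)^2$ (after normalization, $=\mu$ for unit $X$). The idea is: pick a nonzero $X\in T_pN$; then $JX\in T_pN$ as well, so the holomorphic plane $\mathrm{span}\{X,JX\}$ lies in $T_pN$, and the holomorphic sectional curvature of this plane is $\mu\neq 0$. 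Now pick a unit normal $\xi\in T_pN^{\bot}$; since $J\xi\in T_pN^{\bot}$ too, and $M$ has constant holomorphic sectional curvature, evaluate $R$ on a suitable combination involving both the tangent vector $X$ and the normal vector $\xi$ — for instance consider the holomorphic sectional curvature of $\mathrm{span}\{X+\xi, J(X+\xi)\}$, expand using bilinearity, and use strong $R$-invariance to kill the cross terms $R(X,\cdot,\cdot,\xi)$-type components.

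Carrying out that expansion, the holomorphic sectional curvature of $\mathrm{span}\{X+\xi,JX+J\xi\}$ must equal $\mu\,\|X+\xi\|^4 = \mu(\|X\|^2+\|\xi\|^2)^2$ by constancy; on the other hand, expanding $R(X+\xi,JX+J\xi,X+\xi,JX+J\xi)$ multilinearly, every term mixing tangent and normal entries vanishes by strong $R$-invariance (since $R(\text{anything},\text{anything})$ applied to a tangent vector stays tangent, hence is $g$-orthogonal to a normal vector, and vice versa), leaving only $R(X,JX,X,JX)+R(\xi,J\xi,\xi,J\xi)=\mu\|X\|^4+\mu\|\xi\|^4$. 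Comparing, $\mu\|X\|^4+\mu\|\xi\|^4 = \mu(\|X\|^2+\|\xi\|^2)^2 = \mu\|X\|^4+2\mu\|X\|^2\|\xi\|^2+\mu\|\xi\|^4$, which forces $2\mu\|X\|^2\|\xi\|^2=0$. Since $X\neq 0$ and $\xi\neq 0$ can be chosen (the existence of a normal direction requires $\dim N<\dim M$, which holds for a proper submanifold — and if $N=M$ it is trivially not a ``submanifold'' in the nontrivial sense, or the statement is vacuous), we conclude $\mu=0$, contradicting the hypothesis that $\mu$ is non-zero. Hence no such submanifold exists.

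The main obstacle I anticipate is bookkeeping the cross terms in the multilinear expansion and justifying rigorously that \emph{all} of them vanish: this needs both $R(X,Y)T_pN\subset T_pN$ and its consequence $R(X,Y)T_pN^{\bot}\subset T_pN^{\bot}$, together with the pair symmetry $R(X,Y,Z,W)=R(Z,W,X,Y)$, to handle terms such as $R(X,J\xi,\xi,JX)$ where the arguments are not cleanly split three-and-one. A cleaner alternative, which I would likely adopt to sidestep this, is to invoke Lemma~1 of \cite{Kas} directly: strong $R$-invariance gives $R(v,Jv,Jv,\eta)=0$ for $v=X+\xi$ and arbitrary $\eta$, and combined with the constant-holomorphic-sectional-curvature identity this yields the same numerical contradiction with less index-chasing — so the real content is simply choosing the right test vector and letting the established lemmas do the work.
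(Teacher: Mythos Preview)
Your polarization idea is sound and genuinely different from the paper's route, but the cross-term bookkeeping you flag as an obstacle is in fact not resolved by the tools you name. After expanding $R(X+\xi,JX+J\xi,X+\xi,JX+J\xi)$ and discarding every term with a tangent vector in the third slot and a normal vector in the fourth (or vice versa)---these do vanish by strong $R$-invariance---you are left not with two terms but with
\[
R(X,JX,X,JX)+2\,R(X,JX,\xi,J\xi)+R(\xi,J\xi,\xi,J\xi),
\]
and the middle term is \emph{not} killed by strong $R$-invariance plus pair symmetry: here $R(X,JX)\xi\in T_pN^{\bot}$ is being paired with $J\xi\in T_pN^{\bot}$. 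What rescues the argument is the first Bianchi identity,
\[
R(X,JX,\xi,J\xi)+R(JX,\xi,X,J\xi)+R(\xi,X,JX,J\xi)=0,
\]
whose last two summands \emph{do} vanish (tangent third slot, normal fourth slot). With this patch your computation gives $2\mu\|X\|^{2}\|\xi\|^{2}=0$ and the contradiction follows. Your proposed ``cleaner alternative'' via Lemma~1 of \cite{Kas} does not work as stated, since strong $R$-invariance says nothing directly about $R(v,Jv)Jv$ when $v=X+\xi$ lies in neither $T_pN$ nor $T_pN^{\bot}$.

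For comparison, the paper avoids polarization entirely. It quotes Rizza's fundamental twelve-term curvature identity \cite{Riza} valid for any $AH$-manifold of pointwise constant holomorphic sectional curvature $\mu$, rewrites it in vector form, and then substitutes $Y=\xi\in T_pN^{\bot}$, $Z=X\in T_pN$. Every term on the left becomes $R(\,\cdot\,,\,\cdot\,)$ (or $J\circ R(\,\cdot\,,\,\cdot\,)$) applied to a vector of $T_pN$, hence lies in $T_pN$ by strong $R$-invariance, while the right side reduces to $4\mu\,g(X,X)\xi\in T_pN^{\bot}$; comparing components forces $\mu=0$. Your corrected argument is more elementary and self-contained (only the definition of holomorphic sectional curvature, strong $R$-invariance, and Bianchi), whereas the paper's leans on a heavy external identity but sidesteps the multilinear expansion altogether.
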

\begin{proof} In  \cite{Riza}, G.B. Rizza, for an $AH$-manifold $M$ with pointwise
holomorphic constant sectional curvature $\mu$, proved the
following fundamental identity:
\begin{equation}
\label{e7}
\begin{array}{c}
3\{R(X,Y,Z,W)+R(X,Y,JZ,JW)+R(JX,JY,Z,W)\\
\quad\quad\quad+R(JX,JY,JZ,JW)\}-2\{R(X,W,JY,JZ)+R(JX,JW,Y,Z)\\
\quad-R(X,Z,JY,JW)-R(JX,JZ,Y,W)\}-\{R(X,JY,JZ,W)\\
\quad+R(JX,Y,Z,JW)+R(X,JY,Z,JW)+R(JX,Y,JZ,W)\}\\
\quad\quad=4\mu\{g(X,Z)g(Y,W)-g(X,W)g(Y,Z)+g(X,JZ)g(Y,JW)\\
-g(X,JW)g(Y,JZ)+2g(X,JY)g(Z,JW)\}\quad\quad\quad\quad
\end{array}
\end{equation}
for all $X,Y,Z,W\in T_{p}M$ and $p\in M$. After some calculations in (6.1), we conclude that
\begin{equation}
\label{e7}
\begin{array}{c}
3\{R(X,Y)Z-J(R(X,Y)JZ)+R(JX,JY)Z\\
\quad\quad\quad\quad-J(R(JX,JY)JZ)\}-2\{R(JY,JZ)X-J(R(Y,Z)JX)\\
\quad\quad+J(R(X,Z)JY)-R(JX,JZ)Y\}-\{R(X,JY)JZ\\
\quad\quad-J(R(JX,Y)Z)-J(R(X,JY)Z)+R(JX,Y)JZ\}\\
=4\mu\{g(X,Z)Y-g(Y,Z)X-g(X,JZ)JY\quad\quad\\
+g(Y,JZ)JX-2g(X,JY)JZ\}\quad\quad\quad\quad\quad
\end{array}
\end{equation}
for all $X,Y,Z\in T_{p}M$ and $p\in M.$\\
Now, assume that $N$ is a holomorphic strongly $R$-invariant submanifold of an $AH$-manifold
$M$ with pointwise non-zero constant holomorphic sectional curvature
$\mu.$ Then from (6.2) we have
\begin{equation}
\label{e7}
\begin{array}{c}
3R(X,\xi)X+5R(JX,J\xi)X-R(X,J\xi)JX-R(JX,\xi)JX\quad\quad\quad\quad\quad\\
-5J(R(X,\xi)JX)-3J(R(JX,J\xi)X)+J(R(X,J\xi)X)+J(R(JX,\xi)X)\\
=4\mu g(X,X)\xi\quad\quad\quad\quad\quad\quad\quad\quad\quad\quad\quad\quad\quad\quad\quad\quad\quad\quad\quad\quad\quad\quad\quad\quad\quad
\end{array}
\end{equation}
for all $X\in T_{p}N, p\in N$ and $\xi\in
T_{p}N^{\bot}.$ From Proposition 3.3(\cite{Ogi}) and (6.3), we find
$\mu=0.$
\end{proof}
Since a strongly $R$-invariant submanifold is necessarily
$R$-invariant submanifold, from Theorem 5.1 and Theorem 6.1, we have
the following result.
\begin{corollary} If every holomorphic submanifold of a connected $AH$-manifold $M$ of dimension $2m\geq6,$
is a strongly $R$-invariant submanifold, then $M$ is flat.
\end{corollary}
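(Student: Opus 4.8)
The plan is to combine Theorem~5.1 and Theorem~6.1 in the obvious way. Since every holomorphic submanifold of $M$ is a strongly $R$-invariant submanifold by hypothesis, and since a strongly $R$-invariant submanifold is in particular an $R$-invariant submanifold, the hypothesis of Theorem~5.1 is met: every holomorphic submanifold of $M$ is $R$-invariant. Hence assertion \textbf{a)} of Theorem~5.1 applies, so $M$ is a real space form or a complex space form. In either case $M$ has pointwise constant holomorphic sectional curvature; call it $\mu$.

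Next I would rule out $\mu\neq0$. If $\mu$ were a nonzero constant, then $M$ would be an $AH$-manifold with pointwise non-zero constant holomorphic sectional curvature, and by hypothesis it admits a holomorphic strongly $R$-invariant submanifold (one simply takes, through any point $p$ and any holomorphic subspace $\sigma\subseteq T_pM$, the holomorphic submanifold furnished by the hypothesis, which is strongly $R$-invariant). This directly contradicts Theorem~6.1, which asserts that no such submanifold exists. Therefore $\mu=0$.

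Finally, if $M$ is a real space form with constant holomorphic sectional curvature $0$, then its constant sectional curvature must itself be $0$ (the holomorphic sectional curvature of a real space form equals its constant curvature), so $M$ is flat. If $M$ is a complex space form with $\mu=0$, then by definition a complex space form of zero holomorphic sectional curvature has vanishing curvature tensor, so again $M$ is flat. In both cases $M$ is flat, which is the assertion.

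The only subtle point — and the one I would be most careful about — is the quantifier structure: Theorem~6.1 is stated as a non-existence result, so to invoke it I must make sure the hypothesis of the corollary really does produce a concrete holomorphic strongly $R$-invariant submanifold once $\mu$ is known to be nonzero. This is immediate, since "every holomorphic submanifold is strongly $R$-invariant" trivially yields at least one such submanifold (e.g.\ a holomorphic curve or surface through any point, which exists in $M$ since $2m\geq 6\geq 2$). Apart from this bookkeeping the argument is a direct chain of implications with no computation required.
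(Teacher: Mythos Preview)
Your proposal is correct and follows essentially the same approach as the paper: the paper's proof is the one-line remark that strongly $R$-invariant implies $R$-invariant, so one combines Theorem~5.1 with Theorem~6.1, and you have simply spelled out that combination in detail. Your closing caveat about the existence of a holomorphic submanifold is a fair observation, but the paper itself tacitly assumes enough holomorphic submanifolds exist (see the opening of the proof of Theorem~5.1), so your argument matches the paper's level of rigor.
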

We note that Theorem 6.1 and Corollary 6.1 may
be considered as analogues of Theorem 3.4(\cite{Ogi})
and Theorem 3.5(\cite{Ogi}) concerning Riemannian manifolds respectively,
and Theorem 6.1 is a generalizations
of Theorem 4.3(\cite{Ogi}) concerning Kaehlerian manifolds.
We end this paper, giving a different characterization of almost constant-type $AH$-manifolds.
\begin{theorem} There exists no holomorphic strongly $R$-invariant submanifold of an $AH$-manifold
$M$ of non-zero constant type $\alpha$, in a weak sense, at $p\in M$.
\end{theorem}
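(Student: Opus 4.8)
The plan is to run the argument of Theorem 6.1 almost verbatim, except that instead of the holomorphic-space-form identity (6.1) one feeds in the weak constant-type identity (4.3) directly. Since $M$ has constant type $\alpha$, in a weak sense, at $p$, the identity $R(X,Y,X,Y)-2R(X,Y,JX,JY)+R(JX,JY,JX,JY)=2\alpha\{g(X,X)g(Y,Y)-(g(X,Y))^{2}-(g(JX,Y))^{2}\}$, i.e. (4.3), holds for all $X,Y\in T_{p}M$. I would suppose, for contradiction, that there is a proper holomorphic strongly $R$-invariant submanifold $N$ of $M$ with $p\in N$, choose a unit vector $X\in T_{p}N$ and a unit vector $\xi\in T_{p}N^{\bot}$ (such a $\xi$ exists precisely because $N$ is a proper submanifold), and substitute $Y=\xi$ in (4.3).

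Next I would simplify both sides. Because $N$ is holomorphic, $JX\in T_{p}N$ and $J\xi\in T_{p}N^{\bot}$; in particular $g(X,\xi)=g(JX,\xi)=0$, so the right-hand side of (4.3) collapses to $2\alpha$. On the left-hand side, each summand has the form $R(U,V,W,\eta)=g(R(U,V)W,\eta)$ with $W\in\{X,JX\}\subset T_{p}N$ and $\eta\in\{\xi,J\xi\}\subset T_{p}N^{\bot}$. Since $N$ is strongly $R$-invariant, $R(U,V)$ maps $T_{p}N$ into $T_{p}N$ for all $U,V\in T_{p}M$; hence $R(X,\xi)X$, $R(X,\xi)JX$ and $R(JX,J\xi)JX$ all lie in $T_{p}N$ and are therefore orthogonal to both $\xi$ and $J\xi$. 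Thus every term on the left of (4.3) vanishes, and (4.3) reduces to $0=2\alpha$, i.e. $\alpha=0$, contradicting the assumption that the constant type is non-zero.

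There is no genuinely hard step here; the computation is immediate once the substitution is set up. The one point to watch is the simultaneous use of the two hypotheses: holomorphicity is exactly what guarantees $JX\in T_{p}N$ and $J\xi\in T_{p}N^{\bot}$, which is what makes the mixed term $R(X,\xi,JX,J\xi)$ and the term $R(JX,J\xi,JX,J\xi)$ of the right "tangent-times-normal" shape, while strong $R$-invariance is what sends the three curvature images back into $T_{p}N$. I would also remark that the statement implicitly assumes $N$ to be a proper submanifold, since otherwise (taking $N=M$) there is no normal direction at $p$ and nothing to prove.
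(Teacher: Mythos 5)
Your argument is correct and is essentially the paper's own proof: both substitute a unit tangent vector $X\in T_{p}N$ and a unit normal vector $\xi\in T_{p}N^{\bot}$ into the polarized identity (4.3), obtain $2\alpha$ on the right, and kill the left-hand side using strong $R$-invariance together with $JT_{p}N=T_{p}N$ and $JT_{p}N^{\bot}=T_{p}N^{\bot}$. The only difference is cosmetic: where you verify directly that each term $g(R(U,V)W,\eta)$ with $W\in T_{p}N$, $\eta\in T_{p}N^{\bot}$ vanishes, the paper cites Proposition 1.4 of \cite{Ogi} for the same fact.
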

\begin{proof}Assume that $N$ is a holomorphic strongly $R$-invariant submanifold of an $AH$-manifold
$M$ of non-zero constant type $\alpha$, in a weak sense, at $p\in M$. Then from (4.3),
for $X\in T_{p}N$ and $\xi\in T_{p}N^{\bot}$ with $g(X,X)=g(\xi,\xi)=1$, we have
\begin{equation}
\label{e1}
\begin{array}{c}
R(X,\xi,X,\xi)-2R(X,\xi,JX,J\xi)+R(JX,J\xi,JX,J\xi)=2\alpha.
\end{array}
\end{equation}
From Proposition 1.4(\cite{Ogi}) and (6.4), it follows that
$\alpha=0.$ This is a contradiction.
\end{proof}
\subsection*{Acknowledgements}
The author thanks the referee(s) for their comments and suggestions
to improve this paper.

\end{document}